\title{Algebraic Solutions to Multidimensional Minimax Location Problems with Chebyshev Distance\thanks{Recent Researches in Applied and Computational Mathematics: Intern. Conf. on Applied and Computational Mathematics (ICACM’11), WSEAS Press, 2011, pp.~157--162.}
} 
\author{Nikolai Krivulin\thanks{Faculty of Mathematics and Mechanics, St.~Petersburg State University, 28 Universitetsky Ave., St.~Petersburg, 198504, Russia, 
nkk@math.spbu.ru.} \thanks{The work was partially supported by the Russian Foundation for Basic Research under Grant \#09-01-00808.}
}
\date{}
\newtheorem{theorem}{Theorem}
\newtheorem{lemma}[theorem]{Lemma}
\newtheorem{corollary}[theorem]{Corollary}
\begin{document}

\maketitle

\begin{abstract}
Multidimensional minimax single facility location problems with Chebyshev distance are examined within the framework of idempotent algebra. A new algebraic solution based on an extremal property of the eigenvalues of irreducible matrices is given. The solution reduces both unconstrained and constrained location problems to evaluation of the eigenvalue and eigenvectors of an appropriate matrix.
\\

\textit{Key-Words:} single facility location problem, Chebyshev distance, idempotent semifield, eigenvalue, eigenvector.
\end{abstract}

\section{Introduction}

Location problems form one of the classical research domains in optimization that has its origin dating back to XVIIth century. Over many years a large body of research on this topic contributed to the development in various areas including integer programming, combinatorial and graph optimization. Among other solution approaches to location problems are models and methods of idempotent algebra \cite{Baccelli1993Synchronization,Cuninghame-Green1994Minimax,Kolokoltsov97Idempotent,Golan2003Semirings,Heidergott2006Max-plus,Butkovic2010Maxlinear}, which find expanding applications in the analysis of actual problems in engineering, manufacturing, information technology, and other fields. Specifically, an algebraic solution to a one-dimensional location problem on a graph is proposed in \cite{Cuninghame-Green1991Minimax,Cuninghame-Green1994Minimax}. A constrained location problem and its representation in terms of idempotent algebra are examined in \cite{Zimmermann2003Disjunctive,Tharwat2010Oneclass}.

In this paper, we consider a multidimensional minimax single facility location problem with Chebyshev distance, and show how the problem can be solved based on new results of the spectral theory in idempotent algebra. The aim of the paper is twofold: first, to give a new algebraic solution to the location problem, and second, to extend the area of application of idempotent algebra. The rest of the paper is as follows. We begin with an overview of preliminary algebraic definitions and results. Specifically, we present an extremal property of the eigenvalue of irreducible matrices. Furthermore, we examine an unconstrained minimax location problem and represent it in the terms of idempotent algebra. A new solution is given that reduces the problem to evaluation of the eigenvalue and eigenvectors of an irreducible matrix. Finally, the solution is extended to solve a constrained location problem.

\section{Preliminary Results}

We start with a brief overview of definitions, notation and preliminary results of idempotent algebra that underlie the solution approach developed in subsequent sections. Further details can be found in \cite{Baccelli1993Synchronization,Kolokoltsov97Idempotent,Cuninghame-Green1994Minimax,Golan2003Semirings,Heidergott2006Max-plus,Butkovic2010Maxlinear}.

\subsection{Idempotent Semifield}

Let $\mathbb{X}$ be a set with two operations, addition $\oplus$ and multiplication $\otimes$, and their respective neutral elements, $\mathbb{0}$ and $\mathbb{1}$. We suppose that  $(\mathbb{X},\mathbb{0},\mathbb{1},\oplus,\otimes)$ is a commutative semiring where the addition is idempotent and the multiplication is invertible. Since the nonzero elements of the semiring form a group under multiplication, the semiring is usually referred to as idempotent semifield. 

The integer power is defined in the ordinary way. Let us put $\mathbb{X}_{+}=\mathbb{X}\setminus\{\mathbb{0}\}$. For any $x\in\mathbb{X}_{+}$ and integer $p>0$, we have $x^{0}=\mathbb{1}$, $x^{p}=x^{p-1}\otimes x=x\otimes x^{p-1}$, $x^{-p}=(x^{-1})^{p}$, and $\mathbb{0}^{p}=\mathbb{0}$.

We assume that the integer power can naturally be extended to the case of rational and real exponents.

In what follows, we drop, as is customary, the multiplication sign $\otimes$. The power notation is used in the sense of idempotent algebra.

The idempotent addition allows one to define a relation of partial order $\leq$ such that $x\leq y$ if and only if $x\oplus y=y$. From the definition it follows that $x\leq x\oplus y$ and $y\leq x\oplus y$, as well as that the addition and multiplication are both isotonic. Below the relation symbols are thought of as referring to this partial order.

It is easy to verify that the binomial identity now takes the form $(x\oplus y)^{\alpha}=x^{\alpha}\oplus y^{\alpha}$ for all $\alpha\geq0$.

As an example, one can consider the idempotent semifield of real numbers
$$
\mathbb{R}_{\max,+}
=
(\mathbb{R}\cup\{-\infty\},-\infty,0,\max,+).
$$

In the semifield $\mathbb{R}_{\max,+}$, there are the null and identity elements defined as $\mathbb{0}=-\infty$ and $\mathbb{1}=0$. For each $x\in\mathbb{R}$, there exists its inverse $x^{-1}$ equal to the opposite number $-x$ in conventional arithmetic. For any $x,y\in\mathbb{R}$, the power $x^{y}$ coincides with the arithmetic product $xy$.

\subsection{Vectors and Matrices}

Vector and matrix operations are routinely introduced based on the scalar addition and multiplication defined on $\mathbb{X}$. Consider the Cartesian power $\mathbb{X}^{n}$ with its elements represented as column vectors. For any two vectors $\bm{x}=(x_{i})$ and $\bm{y}=(y_{i})$, and a scalar $c\in\mathbb{X}$, vector addition and multiplication by scalars follow the rules
$$
\{\bm{x}\oplus\bm{y}\}_{i}
=
x_{i}\oplus y_{i},
\qquad
\{c\bm{x}\}_{i}
=
cx_{i}.
$$ 

The set $\mathbb{X}^{n}$ with these operations is a vector semimodule over the idempotent semifield $\mathbb{X}$.

As it usually is, a vector $\bm{y}\in\mathbb{X}^{n}$ is linearly dependent on vectors $\bm{x}_{1},\ldots,\bm{x}_{m}\in\mathbb{X}^{n}$, if there are scalars $c_{1},\ldots,c_{m}\in\mathbb{X}$ such that $\bm{y}=c_{1}\bm{x}_{1}\oplus\cdots\oplus c_{m}\bm{x}_{m}$. In particular, the vector $\bm{y}$ is collinear with $\bm{x}$, if $\bm{y}=c\bm{x}$.

For any column vector $\bm{x}=(x_{i})\in\mathbb{X}_{+}^{n}$, we define a row vector $\bm{x}^{-}=(x_{i}^{-})$ with its elements $x_{i}^{-}=x_{i}^{-1}$. For all $\bm{x},\bm{y}\in\mathbb{X}_{+}^{n}$, the componentwise inequality $\bm{x}\leq\bm{y}$ implies $\bm{x}^{-}\geq\bm{y}^{-}$.

For any conforming matrices $A=(a_{ij})$, $B=(b_{ij})$, and $C=(c_{ij})$, matrix addition and multiplication together with  multiplication by a scalar $c\in\mathbb{X}$ are performed according to the formulas
$$
\{A\oplus B\}_{ij}
=
a_{ij}\oplus b_{ij},
\qquad
\{B C\}_{ij}
=
\bigoplus_{k}b_{ik}c_{kj},
\qquad
\{cA\}_{ij}=ca_{ij}.
$$

A matrix with all zero entries is referred to as zero matrix and denoted by $\mathbb{0}$.

Consider the set of square matrices $\mathbb{X}^{n\times n}$. The matrix that has all diagonal entries equal to $\mathbb{1}$ and off-diagonal entries equal to $\mathbb{0}$ is the identity matrix denoted by $I$.

With respect to matrix addition and multiplication, the set $\mathbb{X}^{n\times n}$ forms idempotent semiring with identity.

For any matrix $A\ne\mathbb{0}$ and an integer $p>0$, we have $A^{0}=I$ and $A^{p}=A^{p-1}A=AA^{p-1}$.

The trace of the matrix $A=(a_{ij})$ is defined as $\mathop\mathrm{tr}A=a_{11}\oplus\cdots\oplus a_{nn}$.

A matrix is irreducible if and only if it cannot be put in a block triangular form by simultaneous permutations of rows and columns. Otherwise the matrix is reducible.

\subsection{Eigenvalues and Eigenvectors of Matrices}

A scalar $\lambda$ is an eigenvalue of a square matrix $A\in\mathbb{X}^{n\times n}$ if there exists a vector $\bm{x}\in\mathbb{X}^{n}\setminus\{\mathbb{0}\}$ such that $A\bm{x}=\lambda\bm{x}$. Any vector $\bm{x}\ne\mathbb{0}$ that satisfies the equation is an eigenvector of $A$, corresponding to $\lambda$.

If the matrix $A$ is irreducible, then it has only one eigenvalue given by
\begin{equation}
\lambda
=
\bigoplus_{m=1}^{n}\mathop\mathrm{tr}\nolimits^{1/m}(A^{m}).
\label{E-lambda}
\end{equation}

The corresponding eigenvectors of $A$ have no zero entries and are found as follows. First we evaluate the matrix
$$
A^{\times}
=
\lambda^{-1}A\oplus\cdots\oplus(\lambda^{-1}A)^{n}.
$$

Let $\bm{a}_{i}^{\times}$ and $a_{ii}^{\times}$ be respective column $i$ and diagonal entry $(i,i)$ of $A^{\times}$. Consider the subset of columns $\bm{a}_{i}^{\times}$ such that $a_{ii}^{\times}=\mathbb{1}$, $i=1,\ldots,n$. In the subset, find those columns that are linearly independent of the others, and take them to form a matrix $A^{+}$.

The set of all eigenvectors of $A$ corresponding to $\lambda$ (together with zero vector) coincides with the linear span of the columns of $A^{\times}$, whereas each vector takes the form
$$
\bm{x}
=
A^{+}\bm{v},
$$
where $\bm{v}$ is a nonzero vector of appropriate size.

\section{An Extremal Property of Eigenvalues}

Suppose $A\in\mathbb{X}^{n\times n}$ is an irreducible matrix with an eigenvalue $\lambda$. Consider a function $\varphi(\bm{x})=\bm{x}^{-}A\bm{x}$ defined on $\mathbb{X}_{+}^{n}$. It has been shown in \cite{Krivulin2005Evaluation,Krivulin2006Eigenvalues} that $\varphi(\bm{x})$ has $\lambda$ as its minimum, which is attained at any eigenvector of $A$.

Now we improve the above result by extending the set of vectors that provide for the minimum of $\varphi(\bm{x})$.
\begin{lemma}\label{L-mxmAx}
Let $A=(a_{ij})\in\mathbb{X}^{n\times n}$ be an irreducible matrix with an eigenvalue $\lambda$. Suppose $\bm{u}=(u_{i})$ and $\bm{v}=(v_{i})$ are eigenvectors of the respective  matrices $A$ and $A^{T}$. Then it holds that
$$
\min_{\bm{x}\in\mathbb{X}_{+}^{n}}\bm{x}^{-} A\bm{x}
=
\lambda,
$$
where the minimum is attained at any vector $\bm{x}=(u_{1}^{\alpha}v_{1}^{\alpha-1},\ldots,u_{n}^{\alpha}v_{n}^{\alpha-1})^{T}$ for all $\alpha$ such that $0\leq\alpha\leq1$.
\end{lemma}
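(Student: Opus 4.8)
The plan is to treat the two assertions of the lemma separately. The equality $\min_{\bm{x}\in\mathbb{X}_{+}^{n}}\bm{x}^{-}A\bm{x}=\lambda$, together with the bound $\varphi(\bm{x})=\bm{x}^{-}A\bm{x}\geq\lambda$ valid for every $\bm{x}\in\mathbb{X}_{+}^{n}$, is exactly the result recalled just before the statement from \cite{Krivulin2005Evaluation,Krivulin2006Eigenvalues}, so I would simply invoke it. It then remains only to verify that each vector $\bm{x}=(u_{1}^{\alpha}v_{1}^{\alpha-1},\ldots,u_{n}^{\alpha}v_{n}^{\alpha-1})^{T}$ actually realizes this minimum; since $\varphi(\bm{x})\geq\lambda$ is automatic, the whole task reduces to proving the reverse inequality $\varphi(\bm{x})\leq\lambda$ for all $0\leq\alpha\leq1$. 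Note first that these vectors are well defined: because $A$ is irreducible, both $\bm{u}$ and (by the same token applied to the irreducible matrix $A^{T}$) $\bm{v}$ have no zero entries, so all the powers and inverses below make sense and stay in $\mathbb{X}_{+}$.

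The core of the argument is to convert the two eigenvector equations into complementary componentwise bounds on the entries $a_{ij}$. From $A\bm{u}=\lambda\bm{u}$ one has $\bigoplus_{j}a_{ij}u_{j}=\lambda u_{i}$, hence $a_{ij}u_{j}\leq\lambda u_{i}$ and, after multiplying by $u_{j}^{-1}$, the bound $a_{ij}\leq\lambda u_{i}u_{j}^{-1}$. The matrices $A$ and $A^{T}$ share the single eigenvalue $\lambda$, since $\mathop\mathrm{tr}(A^{m})=\mathop\mathrm{tr}((A^{T})^{m})$ makes the right-hand side of \eqref{E-lambda} identical for both; thus $A^{T}\bm{v}=\lambda\bm{v}$ is meaningful and yields in the same way the complementary bound $a_{ij}\leq\lambda v_{j}v_{i}^{-1}$.

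The key step is then to combine these two bounds with weights $\alpha$ and $1-\alpha$. Raising the first to the power $\alpha$ and the second to the power $1-\alpha$ --- both exponents being nonnegative, so that raising to a power is isotonic --- and multiplying the results gives
$$
a_{ij}
\leq
\lambda\, u_{i}^{\alpha}u_{j}^{-\alpha}v_{i}^{\alpha-1}v_{j}^{1-\alpha}.
$$
Substituting $x_{i}=u_{i}^{\alpha}v_{i}^{\alpha-1}$, so that $x_{i}^{-1}=u_{i}^{-\alpha}v_{i}^{1-\alpha}$, into the generic term $x_{i}^{-1}a_{ij}x_{j}$ and applying this bound, every factor in $u_{i},u_{j},v_{i},v_{j}$ cancels and one is left with $x_{i}^{-1}a_{ij}x_{j}\leq\lambda$ for each pair $(i,j)$. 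Taking the sum $\bigoplus_{i,j}$ over all entries preserves the inequality, which gives $\varphi(\bm{x})=\bm{x}^{-}A\bm{x}\leq\lambda$ and completes the proof.

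I expect the only genuinely nonroutine point to be recognizing the weighting: the fact that the $A$-bound and the $A^{T}$-bound are precisely the two pieces whose $\alpha$-th and $(1-\alpha)$-th powers multiply to cancel all the vector factors in $x_{i}^{-1}a_{ij}x_{j}$. Everything after this observation is exponent bookkeeping, and the endpoints offer a useful sanity check: $\alpha=1$ returns the known eigenvector $\bm{x}=\bm{u}$ of $A$, while $\alpha=0$ returns $\bm{x}=\bm{v}^{-}$, for which the cancellation follows at once from the single bound coming from $A^{T}$.
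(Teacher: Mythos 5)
Your proposal is correct and rests on the same key idea as the paper's proof: interpolating between the eigenvector relation for $A$ (raised to the power $\alpha$) and that for $A^{T}$ (raised to the power $1-\alpha$) so that all factors of $u_i,u_j,v_i,v_j$ cancel in $x_i^{-1}a_{ij}x_j$. The only difference is presentational --- you extract the componentwise bounds $a_{ij}\leq\lambda u_iu_j^{-1}$ and $a_{ij}\leq\lambda v_jv_i^{-1}$ and combine them entry by entry, whereas the paper writes $\lambda=(\bm{u}^{-}A\bm{u})^{\alpha}(\bm{v}^{-}A^{T}\bm{v})^{1-\alpha}$ and bounds this product below by the relevant subsum; these are the same computation.
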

\begin{proof}
It is easy to verify as in \cite{Krivulin2006Eigenvalues} that any vector $\bm{x}$ with nonzero elements satisfies the inequality $\bm{x}^{-}A\bm{x}\geq\lambda$. Indeed, since $\bm{x}\bm{u}^{-}\geq(\bm{x}^{-}\bm{u})^{-1}I$, we have $\bm{x}^{-} A\bm{x}=\bm{x}^{-} A\bm{x}\bm{u}^{-}\bm{u}\geq\bm{x}^{-} A\bm{u}(\bm{x}^{-}\bm{u})^{-1}=\lambda\bm{x}^{-}\bm{u}(\bm{x}^{-}\bm{u})^{-1}=\lambda$.

It remains to present a vector $\bm{x}$ that turns the inequality into an equality. With $\bm{x}=\bm{u}$ we immediately have $\bm{x}^{-}A\bm{x}=\lambda\bm{u}^{-}\bm{u}=\lambda$. Similarly, if $\bm{x}=(\bm{v}^{-})^{T}$, then $\bm{x}^{-}A\bm{x}=\bm{x}^{T}A^{T}(\bm{x}^{-})^{T}=\bm{v}^{-}A^{T}\bm{v}=\lambda\bm{v}^{-}\bm{v}=\lambda$.

Let us now take any vector $\bm{x}=(x_{i})$ with elements $x_{i}=u_{i}^{\alpha}v_{i}^{\alpha-1}$, where $\alpha$ is a real number such that $0\leq\alpha\leq1$. With the following calculations
\begin{multline*}
\lambda
=
(\bm{u}^{-}A\bm{u})^{\alpha}
(\bm{v}^{-}A^{T}\bm{v})^{1-\alpha}
=
\bigoplus_{i=1}^{n}\bigoplus_{j=1}^{n}u_{i}^{-\alpha}a_{ij}^{\alpha}u_{j}^{\alpha}
\bigoplus_{k=1}^{n}\bigoplus_{l=1}^{n}v_{k}^{\alpha-1}a_{lk}^{1-\alpha}v_{l}^{1-\alpha}
\\
\geq
\bigoplus_{i=1}^{n}\bigoplus_{j=1}^{n}u_{i}^{-\alpha}v_{i}^{1-\alpha}a_{ij}u_{j}^{\alpha}v_{j}^{\alpha-1}
=
\bigoplus_{i=1}^{n}\bigoplus_{j=1}^{n}x_{i}^{-1}a_{ij}x_{j}
=
\bm{x}^{-}A\bm{x},
\end{multline*}
we arrive at the inequality $\bm{x}^{-}A\bm{x}\leq\lambda$. Since the opposite inequality is always valid, we conclude that the vector $\bm{x}$ satisfies the condition $\bm{x}^{-}A\bm{x}=\lambda$.
\end{proof}

\section{The Unconstrained Location Problem}

In this section we consider a minimax single facility location problem with Chebyshev distance. For any two vectors $\bm{r}=(r_{1},\ldots,r_{n})^{T}$ and $\bm{s}=(s_{1},\ldots,s_{n})^{T}$ in $\mathbb{R}^{n}$, the Chebyshev distance ($L_{\infty}$ or maximum metric) is calculated as
\begin{equation}
\rho(\bm{r},\bm{s})
=
\max_{1\leq i\leq n}|r_{i}-s_{i}|.
\label{M-Chebyshev}
\end{equation}

Given $m\geq2$ vectors $\bm{r}_{i}=(r_{1i},\ldots,r_{ni})^{T}\in\mathbb{R}^{n}$ and constants $w_{i}\in\mathbb{R}$, $i=1,\ldots,m$, the problem under consideration is to find a vector $\bm{x}=(x_{1},\ldots,x_{n})^{T}$ so as to provide
\begin{equation}
\min_{\bm{x}\in\mathbb{R}^{n}}\max_{1\leq i\leq m}(\rho(\bm{r}_{i},\bm{x})+w_{i}).
\label{P-Chebyshev}
\end{equation}


It is not difficult to solve the problem by using geometric arguments (see, eg, \cite{Sule2001Logistics,Moradi2009Single}). Below we give a new algebraic solution that is based on representation of the problem in terms of the idempotent semifield $\mathbb{R}_{\max,+}$ and application of the result from the previous section. First we rewrite \eqref{M-Chebyshev} as follows
$$
\rho(\bm{r},\bm{s})
=
\bm{s}^{-}\bm{r}\oplus\bm{r}^{-}\bm{s}.
$$

Denote the objective function of the problem by $\varphi(\bm{x})$. With the vectors
$$
\bm{p}
=
w_{1}\bm{r}_{1}\oplus\cdots\oplus w_{m}\bm{r}_{m},
\qquad
\bm{q}^{-}
=
w_{1}\bm{r}_{1}^{-}\oplus\cdots\oplus w_{m}\bm{r}_{m}^{-},
$$
we can write
$$
\varphi(\bm{x})
=
\bigoplus_{i=1}^{m}w_{i}\rho(\bm{r}_{i},\bm{x})
=
\bigoplus_{i=1}^{m}w_{i}(\bm{x}^{-}\bm{r}_{i}\oplus\bm{r}_{i}^{-}\bm{x})
=
\bm{x}^{-}\bm{p}
\oplus
\bm{q}^{-}\bm{x},
$$
and then represent problem \eqref{P-Chebyshev} as
\begin{equation}\label{P-Chebyshev1}
\min_{\bm{x}\in\mathbb{R}^{n}}\varphi(\bm{x})
=
\min_{\bm{x}\in\mathbb{R}^{n}}(\bm{x}^{-}\bm{p}\oplus\bm{q}^{-}\bm{x}).
\end{equation}

Furthermore, we introduce a vector $\bm{y}$ and a matrix $A$ of order $n+1$ as follows
$$
\bm{y}
=
\left(
\begin{array}{c}
\mathbb{1} \\
\bm{x}
\end{array}
\right),
\qquad
A
=
\left(
\begin{array}{cc}
\mathbb{0} & \bm{q}^{-} \\
\bm{p} & \mathbb{0}
\end{array}
\right).
$$

Since we now have $\varphi(\bm{x})=\bm{x}^{-}\bm{p}\oplus\bm{q}^{-}\bm{x}=\bm{y}^{-}A\bm{y}$, problem \eqref{P-Chebyshev1} reduces to that of the form
\begin{equation}\label{P-Chebyshev2}
\min_{\bm{y}\in\mathbb{R}^{n+1}}\bm{y}^{-}A\bm{y}.
\end{equation}

Note that the vectors $\bm{y}\in\mathbb{R}^{n+1}$ that solve \eqref{P-Chebyshev2} do not always have an appropriate form to give a solutions to \eqref{P-Chebyshev1}. Specifically, to be consistent to \eqref{P-Chebyshev1}, the vector $\bm{y}$ must have the first element equal to $\mathbb{1}$.

\section{Algebraic Solution of the Unconstrained Problem}

Consider problem \eqref{P-Chebyshev2}, and note that the matrix $A$ is irreducible. It follows from Lemma~\ref{L-mxmAx} that the minimum in \eqref{P-Chebyshev2} is equal to the eigenvalue $\lambda$ of the matrix $A$. For all $k=1,2,\ldots$ we have
$$
A^{2k-1}
=
(\bm{q}^{-}\bm{p})^{k-1}
\left(
\begin{array}{cc}
\mathbb{0} & \bm{q}^{-} \\
\bm{p} & \mathbb{0}
\end{array}
\right),
\qquad
A^{2k}
=
(\bm{q}^{-}\bm{p})^{k-1}
\left(
\begin{array}{cc}
\bm{q}^{-}\bm{p} & \mathbb{0} \\
\mathbb{0} & \bm{p}\bm{q}^{-}
\end{array}
\right),
$$
and therefore, $\mathop\mathrm{tr}(A^{2k-1})=\mathbb{0}$, $\mathop\mathrm{tr}(A^{2k})=(\bm{q}^{-}\bm{p})^{k}$. Finally, application of \eqref{E-lambda} gives
$$
\lambda
=
(\bm{q}^{-}\bm{p})^{1/2}.
$$

To find vectors that produce the minimum in \eqref{P-Chebyshev2}, we need to derive the eigenvectors of the matrices $A$ and $A^{T}$. Note that $A^{T}$ is obtained from $A$ by replacement of $\bm{p}$ with $(\bm{q}^{-})^{T}$ and $\bm{q}^{-}$ with $\bm{p}^{T}$. Therefore, it will suffice to find the eigenvectors for $A$, and then turn them into the eigenvectors for $A^{T}$ by the above replacement.

To get the eigenvectors of $A$, we first find the matrix $A^{\times}$. Since for any $k=1,2,\ldots$ it holds that
\begin{align*}
(\lambda^{-1}A)^{2k-1}
&=
(\bm{q}^{-}\bm{p})^{-1/2}
\left(
\begin{array}{cc}
\mathbb{0} & \bm{q}^{-} \\
\bm{p} & \mathbb{0}
\end{array}
\right),
\\
(\lambda^{-1}A)^{2k}
&=
(\bm{q}^{-}\bm{p})^{-1}
\left(
\begin{array}{cc}
\bm{q}^{-}\bm{p} & \mathbb{0} \\
\mathbb{0} & \bm{p}\bm{q}^{-}
\end{array}
\right),
\end{align*}
we arrive at the matrix $A^{\times}$ in the form
$$
A^{\times}
=
\lambda^{-1}A\oplus\cdots\oplus(\lambda^{-1}A)^{n+1}
=
\left(
\begin{array}{cc}
\mathbb{1} & (\bm{q}^{-}\bm{p})^{-1/2}\bm{q}^{-} \\
(\bm{q}^{-}\bm{p})^{-1/2}\bm{p} & (\bm{q}^{-}\bm{p})^{-1}\bm{p}\bm{q}^{-}
\end{array}
\right).
$$

It is not difficult to verify that in the matrix $A^{\times}$, any column that has $\mathbb{1}$ on the diagonal is collinear with the first column. Indeed, suppose that the submatrix $(\bm{q}^{-}\bm{p})^{-1}\bm{p}\bm{q}^{-}$ has a diagonal element equal to $\mathbb{1}$, say the element in its first column (that corresponds to the second column of $A^{\times}$). In this case, we have $\bm{q}^{-}\bm{p}=q_{1}^{-1}p_{1}$, whereas the matrix $A^{\times}$ takes the form
$$
A^{\times}
=
\left(
\begin{array}{cc}
\mathbb{1} & q_{1}^{1/2}p_{1}^{-1/2}\bm{q}^{-} \\
q_{1}^{1/2}p_{1}^{-1/2}\bm{p} & q_{1}p_{1}^{-1}\bm{p}\bm{q}^{-}
\end{array}
\right)
=
\left(
\begin{array}{ccc}
\mathbb{1} & q_{1}^{-1/2}p_{1}^{-1/2} & \ldots\\
q_{1}^{1/2}p_{1}^{-1/2}\bm{p} & p_{1}^{-1}\bm{p} & \ldots
\end{array}
\right),
$$
where the second column proves to be collinear with the first.

Let us construct a matrix $A^{+}$ that includes such columns of $A^{\times}$ that have the diagonal element equal to $\mathbb{1}$. Since all these columns are collinear with the first one, they can be omitted. With the matrix $A^{+}$ formed from the first column of $A^{\times}$, we finally represent any eigenvector of $A$ as
$$
\bm{u}
=
\left(
\begin{array}{c}
\mathbb{1} \\
(\bm{q}^{-}\bm{p})^{-1/2}\bm{p}
\end{array}
\right)s,
\qquad
s\in\mathbb{R}.
$$
 
By replacing $\bm{p}$ with $(\bm{q}^{-})^{T}$ and $\bm{q}^{-}$ with $\bm{p}^{T}$, we get the eigenvectors of $A^{T}$
$$
\bm{v}
=
\left(
\begin{array}{c}
\mathbb{1} \\
(\bm{q}^{-}\bm{p})^{-1/2}(\bm{q}^{-})^{T}
\end{array}
\right)t,
\qquad
t\in\mathbb{R}.
$$

It follows from Lemma~\ref{L-mxmAx} that the solution of \eqref{P-Chebyshev1} takes the form
$$
\bm{y}
=
\left(
\begin{array}{c}
u_{0}^{\alpha}v_{0}^{\alpha-1} \\
u_{1}^{\alpha}v_{1}^{\alpha-1} \\
\vdots \\
u_{n+1}^{\alpha}v_{n+1}^{\alpha-1}
\end{array}
\right)
=
\left(
\begin{array}{c}
\mathbb{1} \\
(\bm{q}^{-}\bm{p})^{1/2-\alpha}p_{1}^{\alpha}q_{1}^{1-\alpha} \\
\vdots \\
(\bm{q}^{-}\bm{p})^{1/2-\alpha}p_{n}^{\alpha}q_{n}^{1-\alpha}
\end{array}
\right)s^{\alpha}t^{\alpha-1},
\qquad
s,t\in\mathbb{R}.
$$

With the condition that the first element of $\bm{y}$ must be equal to $\mathbb{1}$, we have to set $s^{\alpha}t^{\alpha-1}=\mathbb{1}$. Going back to problem \eqref{P-Chebyshev1}, we arrive at the following result.
\begin{lemma}\label{L-Chebyshev}
The minimum in problem \eqref{P-Chebyshev1} is given by $\lambda=(\bm{q}^{-}\bm{p})^{1/2}$, and it is attained at any vector
$$
\bm{x}
=
\lambda^{1-2\alpha}
\left(
\begin{array}{c}
p_{1}^{\alpha}q_{1}^{1-\alpha} \\
\vdots \\
p_{n}^{\alpha}q_{n}^{1-\alpha}
\end{array}
\right),
\qquad
0\leq\alpha\leq1.
$$
\end{lemma}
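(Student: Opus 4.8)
Lemma~\ref{L-Chebyshev} is a direct translation, back into the variables of problem~\eqref{P-Chebyshev1}, of what has already been computed in the text preceding the statement. So the proof is essentially a matter of assembling the pieces: the value of the minimum from the eigenvalue computation, the form of the minimizing vectors from Lemma~\ref{L-mxmAx}, and the normalization forced by the requirement that the first entry of $\bm{y}$ equal $\mathbb{1}$. I would first invoke the reduction of \eqref{P-Chebyshev1} to the matrix problem \eqref{P-Chebyshev2} via $\varphi(\bm{x})=\bm{y}^{-}A\bm{y}$, note that $A$ is irreducible, and apply Lemma~\ref{L-mxmAx} to conclude that $\min_{\bm{y}}\bm{y}^{-}A\bm{y}=\lambda$. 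Since the eigenvalue was computed as $\lambda=(\bm{q}^{-}\bm{p})^{1/2}$, this immediately gives the claimed minimum value.

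\emph{Deriving the minimizer.}
Next I would recall the eigenvectors $\bm{u}$ of $A$ and $\bm{v}$ of $A^{T}$ obtained above, and substitute their components $u_{i}^{\alpha}v_{i}^{\alpha-1}$ into the minimizing family from Lemma~\ref{L-mxmAx}. The first (zeroth) coordinate gives $u_{0}^{\alpha}v_{0}^{\alpha-1}=\mathbb{1}^{\alpha}\mathbb{1}^{\alpha-1}=\mathbb{1}$ up to the scalar factor $s^{\alpha}t^{\alpha-1}$, so imposing the constraint that $\bm{y}$ begins with $\mathbb{1}$ forces $s^{\alpha}t^{\alpha-1}=\mathbb{1}$, which eliminates the free scalars $s,t$. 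The remaining coordinates then read
$$
x_{k}
=
\bigl((\bm{q}^{-}\bm{p})^{-1/2}p_{k}\bigr)^{\alpha}
\bigl((\bm{q}^{-}\bm{p})^{-1/2}q_{k}^{-1}\bigr)^{\alpha-1},
$$
and it is a routine power-law simplification, using the binomial identity and $\lambda=(\bm{q}^{-}\bm{p})^{1/2}$, to reduce this to $\lambda^{1-2\alpha}p_{k}^{\alpha}q_{k}^{1-\alpha}$. That yields exactly the stated vector.

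\emph{Where the care is needed.}
There is no deep obstacle here, since the spectral content is already carried by Lemma~\ref{L-mxmAx}; the work is bookkeeping. The one point demanding attention is the exponent arithmetic in the idempotent semifield: the inverse $v_{i}^{-1}$ of an eigenvector component of $A^{T}$ must be tracked correctly through the substitution (recall that $\bm{v}$ was built from $A$ by swapping $\bm{p}\leftrightarrow(\bm{q}^{-})^{T}$ and $\bm{q}^{-}\leftrightarrow\bm{p}^{T}$), and the collection of the $(\bm{q}^{-}\bm{p})$ factors must be done carefully to land on the single factor $\lambda^{1-2\alpha}$. I would also remark explicitly that the whole family is parametrized by $\alpha\in[0,1]$, so that the lemma describes not one minimizer but the entire segment of optimal locations, which is precisely the added content of Lemma~\ref{L-mxmAx} over the earlier single-point result.
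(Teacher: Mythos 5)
Your proposal is correct and follows essentially the same route as the paper, which likewise proves Lemma~\ref{L-Chebyshev} by combining the reduction to problem \eqref{P-Chebyshev2}, the eigenvalue $\lambda=(\bm{q}^{-}\bm{p})^{1/2}$ computed from the traces of the powers of $A$, and the family $y_{i}=u_{i}^{\alpha}v_{i}^{\alpha-1}$ from Lemma~\ref{L-mxmAx} normalized by $s^{\alpha}t^{\alpha-1}=\mathbb{1}$. The exponent bookkeeping also checks out: $(\bm{q}^{-}\bm{p})^{-\alpha/2-(\alpha-1)/2}=(\bm{q}^{-}\bm{p})^{1/2-\alpha}=\lambda^{1-2\alpha}$, matching the stated minimizer.
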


With the usual notation, we can reformulate the statement of Lemma~\ref{L-Chebyshev} as follows.
\begin{corollary}
The minimum in \eqref{P-Chebyshev} is given by $\lambda=\max(p_{1}-q_{1},\ldots,p_{n}-q_{n})/2$, and it is attained at any vector
$$
\bm{x}
=
\alpha
\left(
\begin{array}{c}
p_{1}-\lambda \\
\vdots \\
p_{n}-\lambda
\end{array}
\right)
+
(1-\alpha)
\left(
\begin{array}{c}
q_{1}+\lambda \\
\vdots \\
q_{n}+\lambda
\end{array}
\right),
\qquad
0\leq\alpha\leq1,
$$
where $p_{i}=\max(r_{i1}+w_{1},\ldots,r_{im}+w_{m})$, $q_{i}=\min(r_{i1}-w_{1},\ldots,r_{im}-w_{m})$ for each $i=1,\ldots,n$.
\end{corollary}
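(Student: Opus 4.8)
The plan is to obtain the Corollary purely by translating the idempotent-algebra statement of Lemma~\ref{L-Chebyshev} into conventional arithmetic, using the identification of $\mathbb{R}_{\max,+}$ recalled in the preliminaries. In this semifield $\oplus$ is $\max$, the product $\otimes$ is ordinary addition, the inverse $x^{-1}$ is $-x$, and a power $x^{\alpha}$ denotes the arithmetic product $\alpha x$. Every symbol appearing in Lemma~\ref{L-Chebyshev} will be rewritten under these correspondences, so the task is one of faithful bookkeeping rather than new argument.

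First I would unfold the scalars $p_i$ and $q_i$. From the definitions $\bm{p}=w_1\bm{r}_1\oplus\cdots\oplus w_m\bm{r}_m$ and $\bm{q}^{-}=w_1\bm{r}_1^{-}\oplus\cdots\oplus w_m\bm{r}_m^{-}$, reading $\oplus$ as $\max$ and the scalar action $w_i\bm{r}_i$ as componentwise addition gives $p_i=\max_j(r_{ij}+w_j)$ directly, while the $i$-th component of $\bm{q}^{-}$ equals $q_i^{-1}=-q_i=\max_j(w_j-r_{ij})$, so that $q_i=\min_j(r_{ij}-w_j)$. This reproduces the two formulas stated at the end of the Corollary.

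Next I would translate the optimal value and the minimizing vector. The product $\bm{q}^{-}\bm{p}=\bigoplus_i q_i^{-1}p_i$ becomes $\max_i(p_i-q_i)$, and the exponent $1/2$ becomes a factor $1/2$, so $\lambda=(\bm{q}^{-}\bm{p})^{1/2}$ turns into $\lambda=\max_i(p_i-q_i)/2$. The $i$-th entry of the minimizer in Lemma~\ref{L-Chebyshev}, namely $\lambda^{1-2\alpha}p_i^{\alpha}q_i^{1-\alpha}$, is a product of three semifield powers, so under the correspondence it becomes the ordinary sum $(1-2\alpha)\lambda+\alpha p_i+(1-\alpha)q_i$. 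A one-line rearrangement shows this equals $\alpha(p_i-\lambda)+(1-\alpha)(q_i+\lambda)$, which is exactly the $i$-th entry of the convex-combination vector displayed in the Corollary; the admissible range $0\le\alpha\le1$ is inherited verbatim from the Lemma.

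Since the correctness of the underlying optimization is already established in Lemma~\ref{L-Chebyshev}, the only point demanding care — and the single place an error could slip in — is the sign handling when passing through the inverse: the component of $\bm{q}^{-}$ is $-q_i$, not $q_i$, so the max over $w_j-r_{ij}$ must be negated to recover $q_i=\min_j(r_{ij}-w_j)$. Keeping this sign straight, together with the reading of the semifield power $x^{\alpha}$ as the scalar multiple $\alpha x$, is what makes the three powers collapse cleanly into the stated affine combination, and I expect no further difficulty.
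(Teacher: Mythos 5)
Your proposal is correct and matches the paper's own (implicit) derivation: the paper obtains the Corollary simply by reformulating Lemma~\ref{L-Chebyshev} in conventional notation, which is exactly the translation $\oplus\mapsto\max$, $\otimes\mapsto+$, $x^{-1}\mapsto-x$, $x^{\alpha}\mapsto\alpha x$ that you carry out, including the sign care in recovering $q_{i}=\min_{j}(r_{ij}-w_{j})$ from the components of $\bm{q}^{-}$.
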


\section{A Constrained Location Problem}

Suppose that there is a set $S\in\mathbb{R}^{n}$ given to specify feasible location points. Consider a constrained location problem that is represented in terms of the semifield $\mathbb{R}_{\max,+}$ in the form
\begin{equation}
\min_{\bm{x}\in S}\bigoplus_{i=1}^{m}w_{i}\rho(\bm{r}_{i},\bm{x}).
\label{P-ChebyshevConstrained}
\end{equation}

To solve the problem we put it in the form of \eqref{P-Chebyshev1} by including the area constraints into the objective function of an unconstrained problem. First we slightly transform problem \eqref{P-Chebyshev1} to enable accommodation of the constraints in a natural way. With the notation
\begin{gather*}
\bm{p}_{0}
=
w_{1}\bm{r}_{1}\oplus\cdots\oplus w_{m}\bm{r}_{m},
\qquad
\bm{q}_{0}^{-}
=
w_{1}\bm{r}_{1}^{-}\oplus\cdots\oplus w_{m}\bm{r}_{m}^{-},
\\
\lambda_{0}
=
(\bm{q}_{0}^{-}\bm{p}_{0})^{1/2},
\qquad
\varphi_{0}(\bm{x})
=
\lambda_{0}^{-1}(\bm{x}^{-}\bm{p}_{0}\oplus\bm{q}_{0}^{-}\bm{x}),
\end{gather*}
we turn to the problem
$$
\min_{\bm{x}\in\mathbb{R}^{n}}\varphi_{0}(\bm{x}).
$$

It follows from Lemma~\ref{L-Chebyshev} that the last problem has its minimum equal to $\mathbb{1}=0$, whereas its solution set obviously coincides with that of \eqref{P-Chebyshev1}.

Suppose that there are constraints on the maximum distance from the facility location point to each given points that determine the feasible location set in the form $S=\{\bm{x}\in\mathbb{R}^{n}|\rho(\bm{r}_{i},\bm{x})\leq d_{i}, i=1,\ldots,m\}$.

For each $i=1,\ldots,m$, the inequality $\bm{x}^{-}\bm{r}_{i}\oplus\bm{r}_{i}^{-}\bm{x}=\rho(\bm{r}_{i},\bm{x})\leq d_{i}$ can be rewritten in an equivalent form as $d_{i}^{-1}\bm{x}^{-}\bm{r}_{i}\oplus d_{i}^{-1}\bm{r}_{i}^{-}\bm{x}\leq\mathbb{1}$, whereas all constraints can be replaced with one inequality
$$
\bm{x}^{-}(d_{1}^{-1}\bm{r}_{1}\oplus\cdots\oplus d_{m}^{-1}\bm{r}_{m})\oplus(d_{1}^{-1}\bm{r}_{1}^{-}\oplus\cdots\oplus d_{m}^{-1}\bm{r}_{m}^{-})\bm{x}
\leq
\mathbb{1}.
$$

We introduce the notation
\begin{gather*}
\bm{p}_{1}
=
d_{1}^{-1}\bm{r}_{1}\oplus\cdots\oplus d_{m}^{-1}\bm{r}_{m},
\qquad
\bm{q}_{1}^{-}
=
d_{1}^{-1}\bm{r}_{1}^{-}\oplus\cdots\oplus d_{m}^{-1}\bm{r}_{m}^{-},
\\
\varphi_{1}(\bm{x})
=
\bm{x}^{-}\bm{p}_{1}\oplus\bm{q}_{1}^{-}\bm{x},
\end{gather*}
and note that $\varphi_{1}(\bm{x})\leq\mathbb{1}$ when and only when the above constraints are satisfied.

Furthermore, we put
\begin{gather*}
\bm{p}
=
\lambda_{0}^{-1}\bm{p}_{0}\oplus\bm{p}_{1},
\qquad
\bm{q}^{-}
=
\lambda_{0}^{-1}\bm{q}_{0}^{-}\oplus\bm{q}_{1}^{-},
\\
\varphi(\bm{x})
=
\varphi_{0}(\bm{x})
\oplus
\varphi_{1}(\bm{x})
=
\bm{x}^{-}\bm{p}\oplus\bm{q}^{-}\bm{x}.
\end{gather*}

Now we can replace the original problem \eqref{P-ChebyshevConstrained} by an unconstrained problem with the objective function $\varphi(\bm{x})$ that has the form of problem \eqref{P-Chebyshev1} with the solution given by Lemma~\ref{L-Chebyshev}. It is clear that both problems give the same solution set provided that a solution exists. At the same time, the new problem allows one to get approximate solutions in the case when the original problem does not have a solution.

\bibliographystyle{utphys}

\bibliography{Algebraic_solutions_to_multidimensional_minimax_location_problems_with_Chebyshev_distance}

\end{document}